\documentclass[letterpaper, 10 pt, conference]{ieeeconf}

\IEEEoverridecommandlockouts                  
\usepackage{cite}
\usepackage{amsmath,amssymb,amsfonts,bm}
\usepackage{enumerate}
\usepackage{graphicx}
\usepackage{textcomp}
\usepackage{xcolor}
\usepackage{color}
\usepackage{amsthm}
\usepackage{amsmath}
\usepackage{algorithm}
\usepackage{dsfont}
\newcommand\mydots{\hbox to 0.7em{.\hss.\hss.}}
\usepackage{algpseudocode}

\usepackage[labelsep=period]{caption}

\usepackage{tikz}
\usepackage{dsfont}
\newtheorem{thm}{Theorem}

\newtheorem{lem}[thm]{Lemma}

\theoremstyle{definition}
\newtheorem{ass}{Assumption}

\newtheorem{defn}{Definition}
\newtheorem{rem}{Remark}
\newtheorem{exmp}{Example}
\newtheorem{alg}{Algorithm}

\usepackage[utf8]{inputenc}
\usepackage{color}

\usepackage[titles]{tocloft}

\usepackage{hyperref}

\begin{document}

\title{General-Sum Linear Regulator Games for Positive Systems}

\author{Alba Gurpegui$^{*,1}$ , Monika Tomar$^{*,2}$ and Takashi Tanaka$^2$ 
\thanks{This work is partially funded by the Wallenberg AI, Autonomous Systems and Software Program (WASP), the European Research Council (ERC) under the European Union's Horizon 2020 research and innovation programme under grant agreement No 101199738, DARPA COMPASS program under grant agreement HR0011-25-3-0210 and AFOSR DSCT program under grant agreement FA9550-25-1-0347.}
\thanks{$^*$These authors contributed equally. $^1$ A.Gurpegui is with the Department of Automatic Control and the ELLIIT Strategic Research Area at Lund University, Lund, Sweden. $^2$ M. Tomar and T. Tanaka are with the School of Industrial Engineering, School of Aeronautics and Astronautics, Elmore Family School of Electrical and Computer Engineering, Purdue University, West Lafayette, IN, USA. Email: \href{mailto:alba.gurpegui@control.lth.se}{alba.gurpegui\_ramon@control.lth.se}, 
\href{mailto:tomarm@purdue.edu}{tomarm@purdue.edu}, 
\href{mailto:tanaka16@purdue.edu}{tanaka16@purdue.edu}, }
}
\maketitle
\begin{abstract}
This paper studies a continuous-time general-sum non-cooperative game with linear costs, positive linear system dynamics, and elementwise linear input constraints. In the finite-horizon case, we present 
a verification theorem characterizing feedback Nash equilibria, in terms of absolutely continuous solutions of a coupled system of vector-valued ordinary differential equations, realized by time-varying feedback laws. 
Unlike linear-quadratic differential games, whose Riccati-based equilibria scale quadratically with the state dimension, this formulation scales linearly. However, the resulting piecewise-constant feedback saturates between its constraint bounds rather than varying smoothly, and additional mathematical challenges arise when characterizing the solutions of the differential equations, which are generally discontinuous due to the switching nature of the feedback gains. In this work, we study the case where switching occurs only at isolated time instants. In the infinite-horizon case, under stabilizability assumptions, the equilibrium is characterized by coupled vector-valued algebraic equations. For this game, we propose iterative methods to compute both finite and infinite-horizon equilibria. The approach is illustrated through a large-scale pollution game.
\end{abstract}
\vspace{-3mm}
\section{Introduction}
\vspace{-2mm}
Continuous-time dynamic game theory provides a natural framework for modeling interactions among strategic agents whose decisions evolve over time. This framework finds applications across a wide range of fields, including economics~\cite{app_econom}, ecology~\cite{app_eeco} and epidemiology~\cite{app_epi}. 
In non-cooperative settings, different solution concepts are available, but the Nash equilibrium solution remains the most prominent. This solution is defined by the property that no player can reduce its cost through unilateral deviation. Nash equilibria can be defined in different ways: in open-loop form, where control inputs depend only on time and initial conditions, or in feedback form, where they depend on the current state. In general, these notions are not equivalent, although they coincide for certain classes of games~\cite{Fershtman, ref_open_feedback}. Equilibria in non-cooperative games can be classified according to their robustness and time-consistency. Feedback Nash equilibria are generally considered strongly time consistent, as they allow players to react to deviations in the state trajectory regardless of past policies~\cite{Basar_eq}. These equilibria play a central role in the differential games literature~\cite{basar}, particularly in linear-quadratic LQ games~\cite{basar2, ref9, ref11_engwerda, takashi_ref}, including the Riccati-based formulations for positive linear systems~\cite{azebedo-jank} and associated iterative solvers~\cite{Ivanov}. 
Positivity of the system dynamics arises naturally in game-theoretic settings such as evolutionary and population dynamics~\cite{evol_games} and pollution games~\cite{yu2024research}, where states (populations, pollutant stocks) are intrinsically nonnegative and costs are linear, as the profits from emitting and the costs of pollution control grow in proportion to the amount involved. In this paper, we exploit this positivity property and build on prior work in zero-sum dynamic games, formulated as minimax optimal control problems with linear costs, linear positive dynamics, and elementwise linear input constraints, which has been studied both in discrete~\cite{AlbaEmmaAnders} and continuous time~\cite{LR_paperII}. This class, referred to as the Minimax Linear Regulator (LR) problem, admits explicit solutions for all nonnegative initial conditions under appropriate assumptions. 
Inspired by these results, we extend the minimax LR framework to a general-sum non-cooperative dynamic game setting and study the corresponding feedback Nash equilibria. 
The contributions of this work are summarized below.
\begin{enumerate}[C1]
    \item We propose a general-sum non-cooperative dynamic game with linear costs, positive linear dynamics and elementwise linear constraints on the inputs.
    \item We present a verification theorem characterizing 
    feedback Nash equilibria given by state-feedback policies with piecewise-constant gains, in terms of absolutely continuous solutions of coupled vector-valued ordinary differential equations (ODEs).
    \item Under appropriate stabilizability assumptions, we present the stationary feedback Nash equilibria, which are given by static state-feedback laws and characterized by coupled vector-valued algebraic equations.
    \item We develop iterative algorithms for computing the equilibria of the considered finite and infinite-horizon settings and illustrate our results on a large-scale pollution game.
\end{enumerate}
\vspace{-3mm}
\subsection{Notation}Let $\mathbb{R}_{+}$ denote the set of nonnegative real numbers, $\mathbb{R}^{n}$ the $n$-dimensional Euclidean space and $\mathbb{R}^{n}_{+}$ the positive orthant. $\mathbb{R}^{m \times n}$ denotes the set of $m \times n$ real matrices. Any vector is, by default, a column vector. $\mathds{1}$ denotes the vector of ones of appropriate dimension, and $I_n$ denotes the $n \times n$ identity matrix. For a vector $x$, $\mathrm{diag}(x)$ denotes the diagonal matrix with entries of $x$ on the diagonal. $\left | X \right |$ denotes the matrix obtained by replacing the elements of a real matrix $X$ with their absolute values. Inequalities between matrices or vectors are understood elementwise; in particular, $X \geq 0$ ($X\leq0)$ means that all entries of $X$ are nonnegative (nonpositive). The signum of a scalar is defined as the set-valued map 
\vspace{-5pt}
$$\scalebox{.8}{$\operatorname{sign}(x) =$}
    \scalebox{.8}{$\ensuremath{
    \begin{cases}
        \{-1\} & \text{if } x < 0\\
        [-1,+1] & \text{if } x = 0\\
        \{+1\} & \text{if } x > 0
    \end{cases}}$}.$$
\section{Preliminaries}
Dynamical systems that leave the positive orthant of the state space invariant are called positive systems (related concepts include compartmental systems and monotone systems \cite{Angeli}), and have attracted special attention in the control theory literature. 
A linear dynamical system $\dot x(t)=Ax(t)$ is called \emph{positive} if entry-wise nonnegativity of $x(0)$ implies entry-wise nonnegativity of $x(t)$ for all $t\geq 0$. It is well known that this holds if and only if $A$ is a Metzler matrix (i.e., a square matrix with non-negative off-diagonal entries) ~\cite[Thm. 2.5]{Kaczorek}.
Motivated by this property, we consider the Linear Regulator (LR) problem, an optimal control problem with linear nonnegative cost, positive linear dynamics and elementwise linear input constraints
\begin{align}\label{LR_optprob}
    &\scalebox{.95}{$\underset{\mu}{\min} \hspace{1mm} \int_{0}^{T} \left [ s^{\top}x(\tau) +r^{\top}u(\tau)\right ]d\tau$} \notag \\
            &~\scalebox{.95}{$\mathrm{s.t} \hspace{3mm} \dot x(t)=Ax(t)+Bu(t), \hspace{1mm} x(0)=x_{0}$} \\
        &~~~~~~\hspace{0.7mm}\scalebox{.9}{$u(t)=\mu(x(t)), \hspace{1mm} \left | u \right | \leq E x$}, \notag
\end{align}
where the vector $x(t)$ denotes the time-varying state, $ u(t)$ the control input, $A \in \mathbb R^{n\times n}$, $B \in \mathbb R^{n\times m}$, $E \in \mathbb R^{m\times n}_+$, $s\in \mathbb R^n$, and $r \in \mathbb R^m$. 

Remarkably, explicit solutions for this problem are derived in~\cite{LR_paperII} under the assumption $A-|B|E$ is Metzler and $s-E^{\top}|r|>0$, motivated in Remark~\ref{rem_ass}. Under these conditions, the explicit solution of the problem~\eqref{LR_optprob} is characterized by a vector $p(t)\in \mathbb R^n_+$ satisfying 
\begin{align}\label{LR_ODE}
        \scalebox{.95}{$-\dot p(t)= s+ A^{\top}p(t)- E^{\top} |r+B^{\top}p(t)|, \hspace{2mm} p(T)=0$}.
\end{align}
In fact, if the LR problem has a solution, by Picard-Lindelöf~\cite{kelley2010theory} the solution is unique, the optimal cost is given by  $p(0)^{\top}x_0$ and the optimal control policy is, among all potentially nonlinear policies, a time-varying policy $u^*(t)=-K(t)x(t)$ with $K(t)\in \hspace{0.5mm}\mathrm{diag}(\mathrm{sign}(r+B^{\top}p(t)))E.$ We refer to the entries ($r+B^{\top}p(t)$) as the \textit{input gradients}. The optimal policy defined above is not necessarily unique. Specifically, when an input gradient vanishes at some index $i$, all feedback gain matrices in the set 
\begin{align*}
    \scalebox{.91}{$\mathcal{K}=\left\{DE \hspace{1mm} | \hspace{1mm} D_{ii}\in \left[ -1, 1\right], D_{jj} \in \operatorname{sign}([r+p(t)^{\top}B]_j) \hspace{1mm}\mathrm{ for } \hspace{1mm} j\neq i \right\}$}
\end{align*}    
    lead to the same and unique solution $p(t)$ of the ODE~\eqref{LR_ODE}. 
Importantly,  the optimal policy inherits the sparsity structure of the $E$ matrix, which is determined by the problem designer and may capture limitations in actuation
and sensing. 

In the infinite-horizon case, we additionally require the optimal solutions to ensure closed-loop stability. A notion of stabilizability within the constrained input class $|u|\leq Ex$, is defined in~\cite{LR_paperII} as $E$-stabilizability.
\begin{defn}[$E$-stabilizability]\label{Estab}
    Let $A \in \mathbb{R}^{n\times n}$, $B \in \mathbb{R}^{n\times m }$ and $E  \in \mathbb{R}_{+}^{m \times n}$. The pair $(A,B)$ is $E$-stabilizable if there exists a feedback law $u=-Kx$ with $\left|u \right|\leq Ex$ such that $A-BK$ is Hurwitz. 
\end{defn}
The following Lemma characterizes the infinite-horizon optimal value of the LR problem~\eqref{LR_optprob} as $T\rightarrow \infty$.
\begin{lem}\label{LR_inf_thm}
    Let  $A\in \mathbb{R}^{n\times n}$, $B\in \mathbb{R}^{n\times m }$, $E  \in \mathbb{R}_{+}^{m \times n}$, $s \in \mathbb{R}^{n} $, $r\in \mathbb R^m$. Suppose that the pair $(A,B)$ is $E$-stabilizable and $A-|B|E$ is Metzler, $s-E^{\top}|r| >0$ holds. 
    Then, as $T\rightarrow \infty$, problem~\eqref{LR_optprob} has a finite minimum for every $x_0$ if and only if
\vspace{-7pt}
\begin{align}\label{ARE_LR}
    0&= s+A^{\top}p-K^{\top} (r+B^{\top}p)  
\end{align}
holds with $K \in \hspace{0.5mm}\mathrm{diag}\left(\mathrm{sign}( r+B^{\top} p)\right)E$.  
Moreover, the feedback law $u^*(t) := -Kx$ is optimal and $E$-stabilizing and the optimal cost is given by $J(x_0, u^*)= p^{\top}x_0$.
\end{lem}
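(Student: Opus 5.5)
The plan is to pass to the limit in the finite-horizon characterization~\eqref{LR_ODE}, using time-reversal and monotonicity, and then to verify optimality of the limit with a linear value function $p^{\top}x$. First fix the horizon and write $p_T(t)$ for the solution of~\eqref{LR_ODE}. By time invariance, $q(\sigma):=p_T(T-\sigma)$ solves the autonomous ODE
\begin{align*}
\dot q = s + A^{\top}q - E^{\top}|r+B^{\top}q|, \qquad q(0)=0,
\end{align*}
and $p_T(0)^{\top}x_0=q(T)^{\top}x_0$ is the optimal finite-horizon cost. Because $A-|B|E$ is Metzler and $s-E^{\top}|r|>0$, the finite-horizon cost is nonnegative for every nonnegative $x_0$. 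It is also nondecreasing in $T$, since the stage cost $s^{\top}x+r^{\top}u\ge (s-E^{\top}|r|)^{\top}x\ge 0$ on admissible inputs. Hence $q(\sigma)$ is entrywise nondecreasing and nonnegative.

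For the ``if'' direction, I will show $q$ is bounded using $E$-stabilizability. Take a stabilizing $K_0$ with $|K_0x|\le Ex$. Its cost $\int_0^\infty (s^{\top}-r^{\top}K_0)e^{(A-BK_0)t}x_0\,dt$ is finite and bounds $q(T)^{\top}x_0$ for all $T$. One has to check that $A-BK_0$ is Metzler, or at least that the closed loop keeps $x\ge0$, which follows from $A-|B|E$ Metzler and $|K_0|\le E$ entrywise. So $q(\sigma)\uparrow p$ with $p$ finite. Passing to the limit in the integrated ODE $q(\sigma+1)-q(\sigma)=\int_\sigma^{\sigma+1}(\cdots)$, with a continuous Lipschitz right-hand side, gives $0=s+A^{\top}p-E^{\top}|r+B^{\top}p|$. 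This equals~\eqref{ARE_LR} with $K=\mathrm{diag}(\mathrm{sign}(r+B^{\top}p))E$, because $K^{\top}(r+B^{\top}p)=E^{\top}|r+B^{\top}p|$ for any selection of the sign.

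For optimality and stability, set $V(x)=p^{\top}x$. For any admissible $u$ with $|u|\le Ex$ and $x\ge0$,
\begin{align*}
\dot V + s^{\top}x + r^{\top}u = (s+A^{\top}p)^{\top}x + (r+B^{\top}p)^{\top}u \ge (s+A^{\top}p - E^{\top}|r+B^{\top}p|)^{\top}x = 0,
\end{align*}
with equality for $u=-Kx$. Integrating gives $J_T(x_0,u)\ge p^{\top}x_0-p^{\top}x(T)$, and $u^*$ attains equality.

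Stability of $A-BK$ is the step I expect to be the main obstacle. I would prove it as follows: $(A-BK)^{\top}p=-(s-K^{\top}r)\le -(s-E^{\top}|r|)<0$, with $p\ge0$ and $A-BK$ Metzler. It remains to show $p>0$. From the equation, $-A^{\top}p+E^{\top}|r+B^{\top}p|=s$, and a strictly positive right-hand side forces $p\neq 0$; in fact, any zero entry $p_i=0$ contradicts the $i$th row via the Metzler sign pattern. A Metzler matrix admitting $p>0$ with $(A-BK)^{\top}p<0$ is Hurwitz (a linear Lyapunov function), which gives $E$-stabilization. Then $x(T)\to0$, so $J(x_0,u^*)=p^{\top}x_0$. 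Any other admissible $u$ with finite cost satisfies $J\ge p^{\top}x_0-\liminf p^{\top}x(T)$, and I would handle the tail by comparing with finite-horizon values: $J\ge\lim_T q(T)^{\top}x_0=p^{\top}x_0$.

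For the converse, if the infinite-horizon minimum is finite for every $x_0\ge0$, then $q(T)$ is bounded and monotone, so the same limit argument produces a $p$ solving~\eqref{ARE_LR}.
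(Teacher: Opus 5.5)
The paper gives no argument of its own for this lemma; its proof is a one-line appeal to \cite[Cor.~7]{LR_paperII}. Your proposal is a different, self-contained route, and it is correct. You build the solution of~\eqref{ARE_LR} as the monotone limit of the time-reversed finite-horizon value vector $q(T)=p_T(0)$. The sequence is nondecreasing because the stage cost is at least $(s-E^{\top}|r|)^{\top}x\ge 0$ on the invariant orthant, and it is bounded via a stabilizing $K_0$ with $|K_0|\le E$. You then verify optimality with the linear value function $p^{\top}x$, and get stability from the linear Lyapunov certificate $(A-BK)^{\top}p=-(s-K^{\top}r)<0$, $p>0$, for the Metzler matrix $A-BK$.

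The citation buys brevity. Your route shows where each hypothesis enters:
\begin{itemize}
\item Stabilizability is used only to bound $q$, so the implication from a finite minimum to a solution of~\eqref{ARE_LR} needs no stabilizability.
\item Every nonnegative solution is automatically stabilizing.
\item Conversely, a stabilizing solution equals $-(A-BK)^{-\top}(s-K^{\top}r)\ge 0$.
\end{itemize}
The finite-horizon formula $p_T(0)^{\top}x_0$, which you take from the preliminaries, also follows from the same one-line verification computation with $\dot p_T$, so nothing external is needed.

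One step should be tightened. You close the tail with $J\ge \lim_T q(T)^{\top}x_0$. That proves optimality only for the particular $p=\lim_T q(T)$, whereas Theorem~\ref{thm_inf} applies the lemma to a \emph{given} stabilizing solution $\hat p_1$. Finish instead with the liminf bound you already wrote:
\begin{itemize}
\item If $J(x_0,u)<\infty$, then $\int_0^\infty (s-E^{\top}|r|)^{\top}x\,dt<\infty$ with $s-E^{\top}|r|>0$.
\item Hence $\liminf_{T\to\infty}\mathds 1^{\top}x(T)=0$, and so $\liminf_{T}p^{\top}x(T)=0$ for any $p\ge 0$.
\item Thus $J(x_0,u)\ge p^{\top}x_0$ for every nonnegative solution $p$, which makes every such $p$ the value vector and also gives uniqueness.
\end{itemize}
This argument is the genuine ``if'' direction; what you labeled ``if'' is really the construction of a solution from stabilizability.

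The sign restriction $p\ge 0$, left implicit in the statement, is essential. For $n=m=1$, $A=r=0$, $B=E=s=1$, the vector $p=-1$ solves~\eqref{ARE_LR} with $K=-1$, and $A-BK=1$ is unstable.
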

\begin{proof}
This theorem follows from~\cite[Cor. 7]{LR_paperII}.
\end{proof}

It is worth highlighting that the algebraic equation~\eqref{ARE_LR} can be reformulated equivalently as a linear program~\cite[Thm. 12]{LR_paperII}, leveraging the scalability potential of this optimal control problem class.
\section{Problem Formulation}
Consider a continuous-time system with $N>1$ players described by the differential equation
\begin{align}\label{dynamics}
    \dot x(t)= Ax(t)+ \textstyle\sum\nolimits_{i=1}^N B_i u_i (t), \hspace{1mm} x(0)=x_0
\end{align}
for $t\in [0, T]$, where $x_0 \in \mathbb R^n_+$, $x(t)\in \mathbb R^n$ is the state variable of the system and $u_i(t)\in \mathbb R^{m_i}$ is the control input of the $i$-th player for $i=1,\mydots, N$ satisfying $|u_i|\leq E_ix$, $E_i\in \mathbb R^{{m_i} \times n}_+$. Let $A\in \mathbb R^{n\times n} $ be a Metzler matrix, and $B_i\in \mathbb R^{n \times m_i}$ for all $i$. Each player $i=1,\mydots, N$ has an associated cost functional
\begin{align}\label{cost}
    \scalebox{.94}{$C_i(t, x, u_1,\mydots,u_N)= \int_t^T \left( s_i^{\top}x(\tau)+ r_i^{\top} u_i(\tau) \right) d\tau$}
\end{align}
where $x(s), \hspace{1mm} t\leq s \leq T$ on the right-hand side is defined by \eqref{dynamics} with the initial condition $x(t)=x\in \mathbb R^n_+$, the input $u_i(t)\in \mathbb R^{m_i}$, $i=1,\mydots,N$ and $s_i \in \mathbb R^n_+$, $r_i\in \mathbb R^{m_i}$. We formulate this setting as a non-cooperative differential game, where each player seeks to minimize its individual cost functional, subject to the following assumptions.
\begin{ass}\label{ass_pos_dyn}
    $A-\sum_{i=1}^N |B_i|E_i$ is Metzler.
\end{ass}
\begin{ass}\label{ass_nonneg_cost}
    $s_i-E^{\top}_i|r_i|>0$ for all $i$.
\end{ass}
\begin{rem}\label{rem_ass} Assumption~\ref{ass_pos_dyn} ensures the invariance of the positive orthant under the system dynamics~\eqref{dynamics} and Assumption~\ref{ass_nonneg_cost} guarantees that each cost functional~\eqref{cost} is nonnegative and bounded from below.
\end{rem}
We refer to the resulting non-cooperative differential game as the Linear Regulator (LR) general-sum game. 
In this article, we focus on feedback Nash-equilibrium solutions, as recalled in the following definition~\cite{basar}.
\begin{defn}[Feedback Nash equilibrium] Consider the dynamic game defined by~\eqref{dynamics} and~\eqref{cost}. The set of feedback control laws $(\mu_{1}^*, \mydots, \mu_{N}^*)$ is a feedback Nash equilibrium if 
\begin{align*}
    &\scalebox{.95}{$C_i(t, x, \mu_{1}^*,\mydots, \mu_{i}^*, \mydots, \mu_{N}^*)\leq C_i(t, x, \mu_{1}^*,\mydots, \mu_{i}, \mydots, \mu_{N}^*)$}
\end{align*}
for all pairs \scalebox{.95}{$(t,x)\in [0,T]\times \mathbb R^n_+$}, where \scalebox{.95}{$C_i$} is the cost functional~\eqref{cost}.
\end{defn}
\section{Main Results} 
\subsection{Finite Horizon}
In this section, we study the non-cooperative differential game over a fixed horizon \scalebox{.95}{$[0,T]$}, in which each player chooses an admissible state-feedback law subject to elementwise linear input constraints. 
The corresponding feedback Nash equilibria are characterized by a coupled system of vector-valued ordinary differential equations.

We recall the notion of absolutely continuous (AC) function before stating the main result of this section.
\begin{defn}
    The function \scalebox{.95}{$\gamma:[a,b]\rightarrow\mathbb R$} is absolutely continuous if, for all \scalebox{.95}{$\epsilon>0$}, there exists \scalebox{.95}{$\delta>0$} such that for each finite collection \scalebox{.95}{$\big\{(a_1, b_1),\mydots, (a_n, b_n)\big\}$} of disjoint open intervals contained in \scalebox{.95}{$[a,b]$} with \scalebox{.95}{$\sum_{i=1}^n(b_i-a_i)<\delta$}, it follows that \scalebox{.95}{$\sum_{i=1}^n|\gamma(b_i)-\gamma(a_i)|<\epsilon$}.  
\end{defn}
The following theorem characterizes the feedback Nash equilibrium in terms of absolutely continuous solutions of a coupled ODE system, under the assumption that the entries of the input gradients 
\scalebox{.95}{$r_i+B_i^{\top}p_i$}, \scalebox{.95}{$i=1,\mydots, N$} are nonzero almost everywhere. 
\begin{thm}\label{finite-hor}
    Suppose that Assumption~\ref{ass_pos_dyn} and Assumption~\ref{ass_nonneg_cost} hold.  Suppose also that there exist absolutely continuous functions $p_i:[0,T]\rightarrow \mathbb R^n, \hspace{1mm} i=1,\mydots, N$ satisfying
    \begin{align}\label{ODEs}
    &\scalebox{.94}{$-\dot p_i(t) =s_i+ A^{\top}p_i(t)-K_i(t)^{\top}(r_i+ B_i^{\top}p_i(t))$}\\
    &~~~~~~~~~~~~~~~~~~~~~~~~~~~~~~~~~~~\scalebox{.94}{$-\textstyle\sum\nolimits_{j \neq i} 
    K_j(t)^{\top}B_j^{\top}p_i(t)$} \notag
\end{align} 
almost everywhere, with $p_i(T)=0$, $K_i(t)=\mathrm{diag}(\mathrm{sign}(r_i+B^{\top}_i p_i(t)))E_i$ and the entries of the input gradients of each player $i=1,\mydots, N$ satisfy $r_i+B_i^{\top}p_i(t) \neq 0$ for almost every $t\in [0,T]$. Then the feedback laws   \begin{align}\label{Nash_fin}
        \scalebox{.95}{$u_{i}^*(t)= -K_i(t)x(t)$}
    \end{align}
define a feedback Nash equilibrium for the dynamic game \eqref{dynamics} and \eqref{cost}. Moreover, the cost incurred by player $i$ is given by $p_i(0)^{\top}x_0$, $i=1,\mydots, N$.
\end{thm}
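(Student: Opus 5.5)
The proof is a verification argument with the linear value functions $V_i(t,x)=p_i(t)^{\top}x$. Fix a player $i$ and suppose all other players use $u_j^*=-K_jx$, $j\neq i$. Player $i$ then faces a single-agent LR problem of the form~\eqref{LR_optprob}. Its dynamics are
\begin{align*}
\dot x=\Big(A-\textstyle\sum\nolimits_{j\neq i}B_jK_j(t)\Big)x+B_iu_i,
\end{align*}
with time-varying drift $A_i(t):=A-\sum_{j\neq i}B_jK_j(t)$, cost data $s_i,r_i$, and constraint $|u_i|\le E_ix$. The goal is to show that $u_i^*=-K_ix$ is optimal for this problem among all admissible feedback laws $\mu_i$, with value $p_i(t)^{\top}x$. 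This holds for every $(t,x)\in[0,T]\times\mathbb R^n_+$, and that is exactly the Nash condition.

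\textbf{Step 1 (invariance and admissibility).} Since $K_j(t)=D_j(t)E_j$ with $|D_j|\le I$, we have $|B_jK_j|\le|B_j|E_j$. Hence the off-diagonal entries of $A_i(t)-|B_i|E_i$ dominate those of $A-\sum_j|B_j|E_j$, which is Metzler by Assumption~\ref{ass_pos_dyn}. For any admissible $u_i$ with $|u_i|\le E_ix$ we get $B_iu_i\ge -|B_i|E_ix$ on $\mathbb R^n_+$. A comparison argument for the positive system $\dot z=(A_i(t)-|B_i|E_i)z$, which has Metzler, measurable, bounded coefficients, then shows that trajectories starting in $\mathbb R^n_+$ stay in $\mathbb R^n_+$.

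\textbf{Step 2 (pointwise minimization).} Fix an admissible $\mu_i$ and let $x(\cdot)$ be the resulting trajectory on $[t,T]$. The map $\tau\mapsto p_i(\tau)^{\top}x(\tau)$ is absolutely continuous, since $p_i$ is AC and $x$ is AC. Differentiating almost everywhere and using~\eqref{ODEs} gives
\begin{align*}
\tfrac{d}{d\tau}\big(p_i^{\top}x\big)+s_i^{\top}x+r_i^{\top}u_i
=(r_i+B_i^{\top}p_i)^{\top}u_i+(r_i+B_i^{\top}p_i)^{\top}K_ix.
\end{align*}
The terms $\pm p_i^{\top}A x$ and $\pm p_i^{\top}B_jK_jx$ cancel. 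Write $g=r_i+B_i^{\top}p_i$. Because $x\ge0$ and $|u_i|\le E_ix$, each component satisfies $g_ku_{i,k}\ge -|g_k|(E_ix)_k$. Since $K_i=\mathrm{diag}(\mathrm{sign}(g))E_i$, we have $g^{\top}K_ix=|g|^{\top}E_ix$. So the right-hand side is nonnegative, and it equals zero when $u_i=-K_ix$.

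\textbf{Step 3 (integration).} Integrate over $[t,T]$ and use $p_i(T)=0$. This yields $C_i(t,x,\mu_1^*,\dots,\mu_i,\dots,\mu_N^*)\ge p_i(t)^{\top}x$, with equality for $\mu_i^*$. Taking $t=0$ gives the stated cost $p_i(0)^{\top}x_0$.

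\textbf{Main obstacle.} The delicate part is well-posedness. $K_i(t)$ is set-valued where $g$ vanishes, but by hypothesis this happens only on a null set, so $K_i$ is single-valued almost everywhere and measurable. The closed-loop ODE $\dot x=(A-\sum_jB_jK_j(t))x$ therefore has bounded measurable coefficients and admits a unique Carathéodory (AC) solution, even though $K_i$ is discontinuous at switching instants. This is what justifies the almost-everywhere chain rule used in Step 2. For the deviating player I would restrict to feedback laws $\mu_i$ that generate AC trajectories, which is the natural admissible class. The rest is routine.
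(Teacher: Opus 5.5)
Your proposal is correct and follows essentially the same verification argument as the paper. Both fix the other players' gains, apply the fundamental theorem of calculus for AC functions to $V_i=p_i(\tau)^{\top}x$ to get $C_i=p_i(t)^{\top}x+\int_t^T (r_i+B_i^{\top}p_i)^{\top}(u_i+K_ix)\,d\tau$, and conclude by pointwise minimization over the box $|u_i|\le E_ix$ on the invariant positive orthant. Your explicit comparison argument for invariance (via $|B_jK_j|\le|B_j|E_j$), the general-$N$ cancellation, and the stated restriction to deviations generating AC trajectories fill in points the paper only asserts or defers to the two-player case.
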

\begin{proof}
We prove the result for the two-player case for simplicity, the general case follows analogously. Assume that the coupled ODE system~\eqref{ODEs} admits a pair of AC solutions $(p_1(t), p_2(t))$, $t\in [0,T]$, satisfying~\eqref{ODEs} almost everywhere and such that every entry of \scalebox{.92}{$r_i+B_i^{\top}p_i(t) \neq 0$} for almost every \scalebox{.92}{$t\in [0,T]$}. We verify that the feedback laws defined in~\eqref{Nash_fin} constitute a feedback Nash equilibrium. Fix any pair \scalebox{.92}{$(t,x)\in [0,T]\times \mathbb R^n$} and the second player's equilibrium strategy \scalebox{.92}{$u_2^*=-K_2(t)x$}. The optimization problem of Player 1 has a running cost \scalebox{.92}{$g_1(x, u_1)=s_1^{\top}x+r_1^{\top}u_1$}, dynamics \scalebox{.92}{$f_1(x, u_1)=(A-B_2K_2) x+B_1u_1$} and an admissible set \scalebox{.92}{$U_1(x)=\big\{u_1 \hspace{0.1mm}: \hspace{0.1mm}|u_1|\leq E_1 x \big\}$}.  Define the candidate value function \scalebox{.92}{$V_1(\tau,x)=p_1(\tau)^{\top}x$}. Since $p_1$ is AC by assumption, the right hand side of $f_1$ is measurable in $\tau$ and continuous in $x$, and for every compact set \scalebox{.95}{$D \subset [0,T] \times \mathbb R^n\times \mathbb R^m$}  with \scalebox{.92}{$|x|\leq \alpha \mathds 1$}, for some \scalebox{.9}{$\alpha>0$}, \scalebox{.92}{$|f_1|\leq \kappa \hspace{0.5mm} \alpha \mathds 1$} for all $t$ and \scalebox{.92}{$|f_1(x,u_1)-f_1(y, u_1)|\leq \kappa |x-y| $} with \scalebox{.92}{$\kappa=(\big\|A\big\|+\big\|B_2\big\| \big\|E_2\big\|+\big\|B_1\big\|\big\| E_1 \big\|)$}. By the Carathéodory existence and uniqueness theorems~\cite[Thm. 5.1, Thm. 5.3]{caratheodory} for every admissible input, the system \scalebox{.92}{$f_1(x,u_1)$} admits a unique Carathéodory solution $x$ which is AC, hence \scalebox{.92}{$V_1(\tau,x)$} is AC. 
By the Fundamental Theorem of Calculus for AC functions~\cite[Thm. 7.18]{Rudin} 
\begin{align*}
    \scalebox{.9}{$p_1(T)^{\top}x(T)-p_1(t)^{\top}x(t)=\int_t^T (\dot p_1(\tau)^{\top}x(\tau)+p_1(\tau)^{\top}\dot x(\tau))d \tau$}.
\end{align*}
Since $p_1(T)=0$, substituting the dynamics $\dot x=f_1(x, u_1)$ and by definition of $\dot p_1$ in~\eqref{ODEs} gives
\begin{align*}
    &\scalebox{.87}{$-p_1(t)^{\top}x(t)=\int_{t}^T\big[-s_1^{\top}x(\tau)+(r_1+B_1^{\top}p_1(\tau))^{\top}K_1x(\tau)$}\\\
    &~~~~~~~~~~~~~~~~~~~~~~~~~~~~~~~~~~~~~~~~~~~~~~\scalebox{.87}{$+p_1(\tau)^{\top}B_1u_1(\tau)\big]d\tau$.}
\end{align*}
Adding and subtracting $r_1^{\top}u_1$ inside the integral yields
\begin{align*}
    &\scalebox{.87}{$p_1(t)^{\top}x(t)=\int_t^T\big[(s_1^{\top}x(\tau)+r_1^{\top}u_1(\tau))-r_1^{\top}u_1(\tau)$}\\
    &~~~~~~~~~~~~~~\scalebox{.87}{$-p_1(\tau)^{\top}B_1u_1(\tau)-(r_1+B_1^{\top}p_1(\tau))^{\top}K_1(\tau)x(\tau) \big]d \tau$.}
\end{align*}
Equivalently,
\begin{align}\label{C_aux}
    &\scalebox{.87}{$C_1(t, x, u_1, u_2^*)=p_1^{\top}(t)x(t)$}\\
    &~~~~~~~~~~~~~~~~~~\scalebox{.87}{$+\int_t^T [(r_1+B_1^{\top}p_1(\tau))^{\top}(u_1(\tau)+K_1(\tau)x(\tau))]d\tau.$}\notag
\end{align}
Under the admissible set \scalebox{.92}{$-E_1x(t)\leq u_1(t)\leq E_1x(t)$}, \scalebox{.92}{$u_1(t)$} seeks to minimize \scalebox{.92}{$C_1$}. By Assumption~\ref{ass_pos_dyn} the closed-loop trajectory satisfies $x(t)\geq 0$. Since \scalebox{.92}{$r_1+B_1^{\top}p_1 \neq 0$} for almost every \scalebox{.92}{$\tau \in [t, T]$}, the input \scalebox{.92}{$u_1^*(t)=-\text{diag}(\text{sign}(r_1+B_1^{\top}p_1))E_1 x(t)$} is the pointwise minimizer of \scalebox{.92}{$(r_1+B_1^{\top}p_1(t))^{\top}u_1$} over the box \scalebox{.92}{$[-E_1x(t), E_1x(t)]$} where \scalebox{.92}{$(r_1+B_1^{\top}p_1(t))^{\top}(u_1(t)+K_1(t)x(t))\geq 0$.} 
Integrating over $[t,T]$ yields \scalebox{.92}{$C_1(t, x, u_1, u_2^*)\geq p_1^{\top}(t)x(t)$}, and equality is achieved when \scalebox{.92}{$u_1=u_1^*$}. The same logic applies to player 2. Hence, by~\cite[Definition 6.2]{basar} \scalebox{.92}{$(u_1^*, u_2^*)$} is a feedback Nash equilibrium.
\end{proof}
\begin{rem}\label{rem:measure}
    Note that the right-hand side of~\eqref{ODEs} is piecewise affine in \scalebox{.92}{$p_i$}, \scalebox{.92}{$i=1,\mydots,N$}. On each subinterval where every entry of the input gradients \scalebox{.92}{$r_i+B^{\top}_ip_i(t)\neq 0$}, the matrices \scalebox{.92}{$K_i$} are constant and the system reduces to an affine ODE. The right-hand side is then measurable in $t$, continuous in $p_i$ and bounded by \scalebox{.92}{$\big\|s_i\big\|+\big\|E_i\big\|\big\|r_i\big\|+(\big\| A\big\|+\textstyle\sum\nolimits_{j=1}^N \big\| B_j \big\| \big\|E_j\big\|)\beta$} on every compact set with \scalebox{.92}{$|p_i|\leq \beta$}, for some \scalebox{.92}{$\beta>0$}, satisfying the Carathéodory conditions, which imply the existence of a solution~\cite[Thm. 5.1]{caratheodory}. Uniqueness in each subinterval holds since the right-hand side is Lipschitz in $p_i$ with constant \scalebox{.92}{$\kappa=(\big\| A\big\|+\textstyle\sum\nolimits_{j=1}^N \big\| B_j\big\| \big\|E_j \big\|)$}. Global uniqueness on $[0,T]$ then follows by induction over the non-switching intervals. 
\end{rem}
The following example illustrates a solution that is not covered by Theorem~\ref{finite-hor}, where \scalebox{.92}{$r_i+B_i^{\top}p_i(t)=0$} on an interval \scalebox{.92}{$I \subseteq [0, T]$} of positive measure.
\begin{figure}[h]
    \centering
    \includegraphics[width=0.37\textwidth]{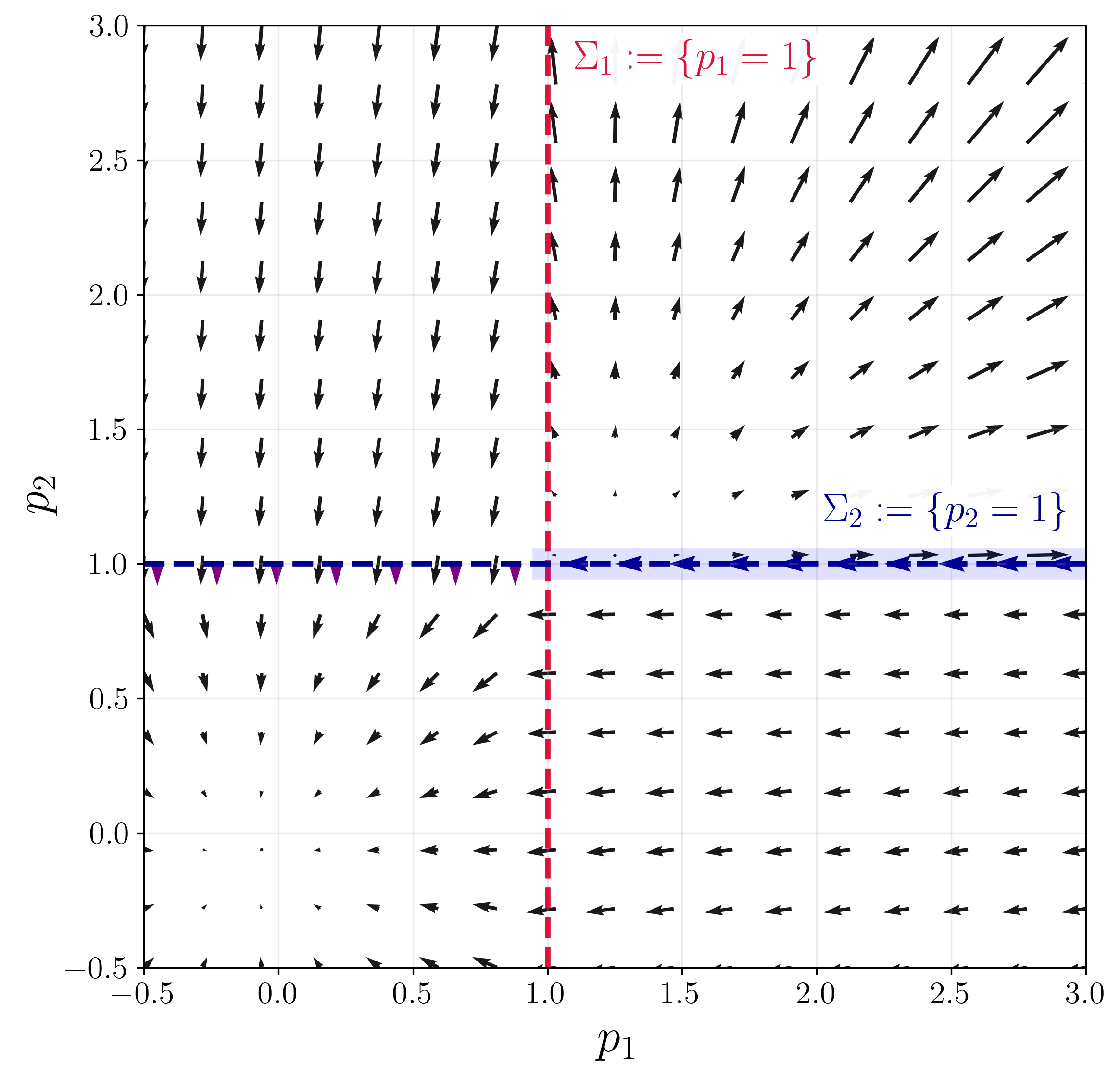}
    \caption{Phase diagram of Example 1 illustrating the switching surfaces $\Sigma_1=\left\{ p_1=1\right\}$ and $\Sigma_2=\left\{ p_2=1\right\}$.}
    \vspace{-20pt}
    \label{fig_exmp_diff_inc}
\end{figure}
\begin{exmp}\label{exmp_filippov}
     Consider \scalebox{.94}{$A=0$}, \scalebox{.94}{$B_1=B_2=1$}, \scalebox{.94}{$E_1=0.5$}, \scalebox{.94}{$E_2=0.4$}, \scalebox{.94}{$r_1=r_2=-1$}, \scalebox{.94}{$s_1=0.6$}, \scalebox{.94}{$s_2=0.5$}, \scalebox{.94}{$T=5$}, \scalebox{.94}{$p_i(T)=0$}. The set of coupled ODEs~\eqref{ODEs} becomes
\begin{align}\label{ODEs_exmp}
    \scalebox{.94}{$\dot {p_1}(t)=-0.6+(p_1(t)-1)K_1(t)+p_1(t)K_2(t)$}\\
    \scalebox{.94}{$\dot{p_2}(t)=-0.5+(p_2(t)-1)K_2(t)+p_2(t)K_1(t)$}
    \label{ODEs_exmp2}
\end{align}
with \scalebox{.94}{$K_1(t)\in 0.5 \hspace{0.5mm}\mathrm{sign}(p_1(t)-1)$}, \scalebox{.94}{$K_2(t)\in 0.4\hspace{0.5mm} \mathrm{sign}(p_2(t)-1).$} Assume \scalebox{.94}{$p_2(t)=1$} and \scalebox{.94}{$p_1(t)\geq 1$} on an interval \scalebox{.94}{$I \subseteq [0, T]$} so that the input gradient is zero on a set of positive measure. 
Recall that this case is not covered by Theorem~\ref{finite-hor}, where it is assumed to be nonzero almost everywhere. From the second equation~\eqref{ODEs_exmp2} \scalebox{.94}{$\dot p_2(t)=0=-0.5+K_1(t)$}, so \scalebox{.94}{$K_1(t)=0.5$}. Then \scalebox{.94}{$\dot p_1=-1.1+(0.5+K_2(t))p_1$}. Since \scalebox{.94}{$K_2(t)\in [-0.4, 0.4]$}, define \scalebox{.94}{$\beta(t):=0.5+K_2(t)\in [0.1, 0.9]$}. Thus, any trajectory satisfying \scalebox{.94}{$p_2=1$} and \scalebox{.94}{$\dot p_1(t)=-1.1+\beta(t)p_1(t)$} with \scalebox{.94}{$\beta(t)\in [0.1, 0.9]$} is admissible. Moreover, because along \scalebox{.94}{$p_2(t)=1$} the value of \scalebox{.94}{$K_2(t)$} is not uniquely defined, the solution fails to be unique and the dynamics are more appropriately described by a differential inclusion~\cite{Cortes, vinter_siam}, which is beyond the scope of this work. Note that on $I$, different choices of \scalebox{.94}{$K_2(t)$} correspond to different trajectories for \scalebox{.94}{$p_1$} through the coupling term \scalebox{.94}{$p_1(t)K_2(t)$}, so non-uniqueness on one player's feedback propagates to the other player's ODE. This behavior is illustrated in Figure~\ref{fig_exmp_diff_inc}, which shows the phase diagram of the system.
\end{exmp}
For approximating the backward ODE system, we next introduce a fully implicit Backward Euler active-set scheme. 
\begin{algorithm}[H]
\caption{Backward Euler with Active-Set Iteration}
\label{alg:BE-AS}
\begin{algorithmic}
\Require  \scalebox{.9}{$T,M,P^T,\ell_{\max}, \textcolor{red}{\epsilon}$ with $\Delta t=T/M$}
\State  \scalebox{.9}{$P^M \gets P^T $, \quad $S_i^M \gets \mathrm{diag}(\mathrm{sign}(r_i+B_i^\top p_i^M))$}

\For{ \scalebox{.9}{$k=M,\mydots,1$}}
\State  \scalebox{.9}{$P^{(0)} \gets P^k$, \quad $S^{(0)} \gets S^k$, \quad $\ell \gets 0$}
\Repeat \hspace{2mm}  \scalebox{.9}{Solve implicit Euler for $P^{(\ell+1)}$}:
\State  \scalebox{.9}{$Q^{(\ell)} \gets I-\Delta t\!\left(A^\top-\sum_{j=1}^{N}E_j^{\top}S_j^{(\ell)}B_j^\top\right)$}
\For{$i=1,\mydots,N$}
\State \scalebox{.9}{$p_i^{(\ell+1)} \gets (Q^{(\ell)})^{-1}\!\left[p_i^k+\Delta t(s_i-E_i^\top S_i^{(\ell)}r_i)\right]$}
\State  \scalebox{.9}{$S_i^{(\ell+1)} \gets \mathrm{diag}(\mathrm{sign}(r_i+B_i^\top p_i^{(\ell+1)}))$}
\EndFor
\State  \scalebox{.9}{$\ell \gets \ell+1$}
\Until{\scalebox{.9}{$S^{(\ell)}\in\{S^{(0)},\ldots,S^{(\ell-1)}\}$} or $\ell=\ell_{\max}$}
\State  \scalebox{.9}{\textbf{if} $S^{(\ell)}\neq S^{(\ell-1)}$ and $\mathrm{Res}(P^{(\ell)}, S^{(\ell)})>\epsilon$, \textbf{return} \texttt{Fail}}
\State \scalebox{0.9}{$P^{k-1} \gets P^{(\ell)}$, \quad $S^{k-1} \gets S^{(\ell)}$}
\EndFor
\State \Return  \scalebox{.9}{$\{P^k,S^k\}_{k=0}^M$}
\end{algorithmic}
\end{algorithm}
\begin{alg}
    As mentioned in Remark \ref{rem:measure} for the Carathéodory notion of solution, we assume that switching of the active set occurs only at isolated times forming a set of measure zero. Let \scalebox{.95}{$S_i(t) = \mathrm{diag}(\mathrm{sign}(r_i+B^{\top}_i p_i(t)))$}. Since the discrete active set \scalebox{.95}{$S_i(t)$} depends on the implicitly unknown state \scalebox{.95}{$p_i(t)$}, we employ an implicit Backward Euler discretization coupled with a fixed-point active-set iteration. At each time step, the algorithm iterates between solving for the $p$-vector and updating the control signs. Repeated sign patterns detect cycling; the iterate is accepted if the discrete residual is below tolerance, otherwise the computation is terminated and may be repeated with a smaller time step.
\end{alg}
\subsection{Infinite Horizon}
In the infinite-horizon setting, we restrict attention to constant linear state-feedback strategies. This is motivated by the requirement of closed-loop stability to guarantee finiteness of the cost, as well as by the fact that the single-player LR problem admits a static optimal solution. 

Denote $u_{K}(t)=-Kx(t)$, $K\in \mathbb R^{m\times n}$.
In this subsection we are concerned with the dynamic game constituted by the dynamics~\eqref{dynamics} and the cost functions~\eqref{cost} as $T\rightarrow \infty$, i.e.
\begin{align}\label{cost_inf}
    \scalebox{.95}{$J_i(x_0, u_1,\mydots, u_N)
= \int_0^{\infty} \left( s_i^{\top}x(t)+ r_i^{\top} u_{K_i}(t) \right) dt$}.
\end{align}
under Assumptions~\ref{ass_pos_dyn} and~\ref{ass_nonneg_cost}. 

In this case, a set of feedback strategies is admissible if it belongs to the subset of state-feedback control laws that render the zero equilibrium of system~\eqref{dynamics} asymptotically stable. In this context, the notion of $E$-stabilizability in Definition~\ref{Estab}, extended to the dynamic game setting, is relevant and necessary for the Nash equilibrium analysis.
\begin{ass}
    We shall limit our set of permitted controls to the constant feedback strategies which are stabilizing. \begin{align}\label{stab_set}
    \scalebox{.94}{$\mathcal K_N:= \Big\{ (K_1,\mydots , K_N): \hspace{0.1mm} A-\textstyle\sum\nolimits_{i=1}^NB_iK_i \hspace{1mm} \text{is} \hspace{1mm}\text{Hurwitz}
    \Big\}$}
\end{align}
\end{ass}
\begin{rem}\label{stblrem}
To verify the \scalebox{.94}{$(E_1,\mydots, E_N)$}-stabilizability of the tuple \scalebox{.94}{$(A,B_1,\mydots, B_N)$} with \scalebox{.94}{$A$} being Metzler, by~\cite[Lem. 4]{LR_paperII} it is necessary and sufficient to verify the feasibility of
\begin{align*}
         \scalebox{.94}{$Ax + \textstyle\sum\nolimits_{i=1}^NB_i u_i \le - \mathds 1, \quad -E_ix \le u_i \le E_ix, \hspace{1mm} \forall i.$}
\end{align*}
\end{rem}
In the sequel, this stabilization constraint is imposed to ensure the finiteness of the infinite-horizon integrals in~\eqref{cost_inf}. Note that in contrast to LQR, in the LR setting this assumption will impose restrictions on the design of the matrix $E$. 
\begin{defn}[Stationary Feedback Nash Equilibrium]
Consider the dynamic game defined by the system~\eqref{dynamics} and the cost function~\eqref{cost_inf}, for $i=1, \mydots, N$. An admissible set of strategies \scalebox{.94}{$(u_{K_1^*},\mydots, u_{K_N^*})$}, with \scalebox{.94}{$ u_{K_i^*}=-K_i^*x$ and $(K_1^*,\mydots,K_N^*)\in \mathcal K_N$}, constitutes a stationary feedback Nash equilibrium if the following inequalities hold 
\begin{align*}
    \scalebox{.93}{$J_i(x_0, u_{K_1^*},\mydots , u_{K_i^*}, \mydots,  u_{K_N^*})\leq J_i(x_0, u_{K_1^*},\mydots,  u_{K_i}, \mydots, u_{K_N^*})$} 
\end{align*}
for all $i$, $x_0$ and for each admissible \scalebox{.92}{$K_i$}, \scalebox{.94}{$i=1,\mydots, N$} such that \scalebox{.94}{$(K_1^*, \mydots, K_i, \mydots, K_N^*)\in \mathcal K_N$}. 
\end{defn}
Theorem~\ref{thm_inf} states that the feedback Nash equilibria of the LR infinite-horizon problem are characterized,  within the admissible class $\mathcal K_N$ by the solution of the equations
\begin{align}\label{AREs}
    &\scalebox{.94}{$0=s_i^{\top}+ p_i^{\top}A-(r_i^{\top}+ p_i^{\top}B_i)K^*_i-\textstyle\sum\nolimits_{j \neq i} 
    p_i^{\top}B_j K^*_j ,$}
\end{align}
where \scalebox{.94}{$K^*_i \in\hspace{0.5mm} \mathrm{diag}(\mathrm{sign}(r_i+B^{\top}_i p_i))E_i$}. 
\begin{defn}\label{stab_sol}
    A vector tuple \scalebox{.94}{$(\hat p_1,\mydots, \hat p_N)$} with \scalebox{.94}{$\hat p_i\in \mathbb R^n_+$} is called a stabilizing solution of the coupled equations~\eqref{AREs} if each \scalebox{.94}{$\hat p_i$} satisfies~\eqref{AREs} where \scalebox{.94}{$K_i \in\hspace{0.5mm} \mathrm{diag}(\mathrm{sign}(r_i+B_i^{\top}\hat p_i))E_i$} and the matrix \scalebox{.94}{$A-\sum_{i=1}^N B_i K_i$} is Hurwitz. 
\end{defn}
\begin{thm}\label{thm_inf}
Suppose Assumptions~\ref{ass_pos_dyn} and~\ref{ass_nonneg_cost} hold, $(A, B_1, \mydots, B_N)$ is $(E_1,\mydots, E_N)$-stabilizable in the sense of Definition~\ref{Estab} and $(\hat p_1, \mydots, \hat p_N)$, $\hat p_i\in \mathbb R^n_+$ for all $i$ is a set of stabilizing solutions of the algebraic equations~\eqref{AREs} in the sense of Definition~\ref{stab_sol} with $K_i^*\in\hspace{0.5mm}\mathrm{diag}(\mathrm{sign}(r_i+B^{\top}_i \hat p_i))E_i$. Then $(u_{K_1^*}, \mydots, u_{K_N^*})$ is a stationary feedback Nash equilibrium. Moreover, the cost incurred by player $i$ by playing this equilibrium action is $\hat p^{\top}_i x_0$,  $i=1, \mydots, N$. 

Conversely, if $(u_{K_1^*}, \mydots, u_{K_N^*})$ is a stationary feedback Nash equilibrium, then there exists a stabilizing solution $(\hat p_1, \mydots, \hat p_N)$ of the algebraic equations~\eqref{AREs} in the sense of Definition~\ref{stab_sol} such that $K_i^*\in\hspace{0.5mm}\mathrm{diag}(\mathrm{sign}(r_i+B_i^{\top}\hat p_i))E_i$.
\end{thm}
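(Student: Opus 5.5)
The plan is to reduce each player's problem to a single-player LR problem by freezing the other players' gains, and then to invoke Lemma~\ref{LR_inf_thm} in both directions.

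\textbf{Sufficiency.} Fix $i$ and let every $j\neq i$ play $K_j^*$. Player $i$ then faces the single-player LR problem with
\begin{align*}
\tilde A_i := A-\textstyle\sum\nolimits_{j\neq i}B_jK_j^*, \qquad B_i,\ E_i,\ s_i,\ r_i .
\end{align*}
First I check the hypotheses of Lemma~\ref{LR_inf_thm}.
\begin{itemize}
\item $\tilde A_i-|B_i|E_i$ is Metzler. Each $|B_jK_j^*|\le |B_j|E_j$ because $K_j^*=D_jE_j$ with $|D_j|\le I$. The off-diagonal entries of $\tilde A_i-|B_i|E_i$ therefore dominate those of $A-\sum_j|B_j|E_j$, which are nonnegative by Assumption~\ref{ass_pos_dyn}.
\item $s_i-E_i^\top|r_i|>0$ holds directly by Assumption~\ref{ass_nonneg_cost}.
\item $(\tilde A_i,B_i)$ is $E_i$-stabilizable, since $\tilde A_i-B_iK_i^*$ is Hurwitz by Definition~\ref{stab_sol}.
\end{itemize}
Next, the $i$-th equation of \eqref{AREs}, transposed, reads
\begin{align*}
0=s_i+\tilde A_i^\top\hat p_i-K_i^{*\top}(r_i+B_i^\top\hat p_i),
\end{align*}
with $K_i^*\in\mathrm{diag}(\mathrm{sign}(r_i+B_i^\top\hat p_i))E_i$. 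This is exactly \eqref{ARE_LR} for the reduced problem. Lemma~\ref{LR_inf_thm} then says that $u=-K_i^*x$ is optimal over all admissible inputs, and that its cost is $\hat p_i^\top x_0$.

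Optimality over all admissible inputs in particular covers all constant gains $K_i$ with $(K_1^*,\dots,K_i,\dots,K_N^*)\in\mathcal K_N$. So no player can profit from a unilateral deviation, and the tuple is a stationary feedback Nash equilibrium.

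If I want a self-contained argument rather than relying on the lemma, I would redo the verification from Theorem~\ref{finite-hor} on $[0,T]$ with $V_i=\hat p_i^\top x$. This gives
\begin{align*}
J_i^T=\hat p_i^\top x_0-\hat p_i^\top x(T)+\int_0^T (r_i+B_i^\top\hat p_i)^\top(u_i+K_i^*x)\,d\tau .
\end{align*}
The integrand is nonnegative because $x\ge0$. Letting $T\to\infty$, stability gives $x(T)\to0$.

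\textbf{Necessity.} Let $(K_1^*,\dots,K_N^*)\in\mathcal K_N$ be a stationary Nash equilibrium and fix $i$. With the others frozen, $K_i^*$ minimizes $J_i$ over stabilizing constant gains, for every $x_0\ge0$.

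Define $\hat p_i$ as the cost vector of the equilibrium closed loop. Set $A_{cl}=A-\sum_jB_jK_j^*$, which is Hurwitz and Metzler; by the Metzler bound argument above, $-A_{cl}^{-1}\ge0$. Then
\begin{align*}
\hat p_i^\top := (s_i^\top-r_i^\top K_i^*)(-A_{cl})^{-1}.
\end{align*}
Here $s_i-K_i^{*\top}r_i\ge s_i-E_i^\top|r_i|>0$, so $\hat p_i\ge0$, and $J_i(x_0)=\hat p_i^\top x_0$. By construction,
\begin{align*}
0=s_i+A_{cl}^\top\hat p_i-K_i^{*\top}r_i,
\end{align*}
which is \eqref{AREs} except for the sign condition on $K_i^*$.

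Now apply Lemma~\ref{LR_inf_thm} to the reduced problem; its hypotheses hold as above. It gives an LR solution $p_i$ with optimal static gain $\bar K_i\in\mathrm{diag}(\mathrm{sign}(r_i+B_i^\top p_i))E_i$ and minimum cost $p_i^\top x_0$. That gain is admissible, since it is $E$-stabilizing, so the Nash inequality yields $\hat p_i^\top x_0\le p_i^\top x_0$. Optimality of $p_i$ yields the reverse inequality. Both hold for all $x_0\ge0$, hence $\hat p_i=p_i$.

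It remains to show $K_i^*\in\mathrm{diag}(\mathrm{sign}(r_i+B_i^\top\hat p_i))E_i$. Subtracting the two closed-loop identities gives
\begin{align*}
(K_i^*-\bar K_i)^\top(r_i+B_i^\top\hat p_i)=0 .
\end{align*}
Each row of $\bar K_i$ attains the minimum of $(r_i+B_i^\top\hat p_i)^\top(-Kx)$ over the box $|Kx|\le E_ix$. The displayed identity then forces, for every $x\ge0$,
\begin{align*}
\textstyle\sum\nolimits_k (r_i+B_i^\top\hat p_i)_k\,\big(E_ix-\mathrm{sign}(\cdot)_kK_i^*x\big)_k=0 ,
\end{align*}
where every summand is nonnegative. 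So each summand vanishes. For indices $k$ with a nonzero gradient entry, this gives row $k$ of $K_i^*$ equal to $\mathrm{sign}_k(E_i)_k$, which is the required membership.

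\textbf{Main obstacle.} I expect the main difficulty to be this last identification step. It is where zero input gradients create non-uniqueness, and where one needs $E_i$ to have enough positive support for the identity on $x\ge0$ to pin down rows. Taking $x$ over a basis of $\mathbb R^n_+$ should suffice, since an identity that holds for every $x\ge0$ can be checked entrywise.
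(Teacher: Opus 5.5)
Your sufficiency argument is the paper's: freeze the other gains, read the $i$-th equation of \eqref{AREs} as \eqref{ARE_LR} for $(\tilde A_i,B_i,E_i,s_i,r_i)$, and invoke Lemma~\ref{LR_inf_thm}. You also check the lemma's hypotheses, which the paper skips, and you add a self-contained verification; both are fine. Your necessity argument takes a different and more careful route than the paper's. The paper gets $\hat p_i$ and the LR-optimal gain from Lemma~\ref{LR_inf_thm}, then simply substitutes the sign-set expression for $K_j^*$, i.e.\ it tacitly assumes the Nash gain already has the sign structure. You instead build $\hat p_i$ from the Nash closed loop and use the Nash inequality in both directions to get $\hat p_i=p_i$. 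You then extract the structure from $(K_i^*-\bar K_i)^\top g=0$, with $g:=r_i+B_i^\top\hat p_i$; this is exactly the step the paper omits. Two repairs are needed. First, the summands must be $|g_k|\bigl((E_ix)_k-\mathrm{sign}(g_k)(K_i^*x)_k\bigr)$; with your coefficient $g_k$, the terms with $g_k<0$ are nonpositive. Second, in this direction the Metzler property of $A_{cl}$ and of $\tilde A_i-|B_i|E_i$ must come from $|K_j^*|\le E_j$, obtained by testing $|K_j^*x|\le E_jx$ on unit vectors. It cannot come from $K_j^*=D_jE_j$, which is what you are trying to prove.

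The genuine gap is your last sentence: you pin down only the rows with $g_k\neq0$ and call that ``the required membership''. Under the paper's definition, $\mathrm{diag}(\mathrm{sign}(g))E_i=\{DE_i: D \text{ diagonal},\ D_{kk}\in\mathrm{sign}(g_k)\}$, a row with $g_k=0$ must equal $d\,(E_i)_k$ for a scalar $|d|\le 1$. Your identity is vacuous on such rows, and in fact no argument can supply this. Take $N=2$ with a trivial second player ($E_2=0$, $s_2>0$), and $A=\begin{pmatrix}-3&1\\1&-3\end{pmatrix}$, $B_1=\tfrac12\mathds{1}$, $E_1=\mathds{1}^\top$, $s_1=\mathds{1}$, $r_1=-\tfrac12$. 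Then Assumptions~\ref{ass_pos_dyn}--\ref{ass_nonneg_cost} hold and $A$ is Hurwitz. Moreover $p=\tfrac12\mathds{1}$ satisfies $s_1+A^\top p=0$ and $r_1+B_1^\top p=0$, so every admissible stabilizing $K_1$ has cost $p^\top x_0$. Hence $K_1^*=\begin{pmatrix}1&-1\end{pmatrix}$, which is admissible with $A-B_1K_1^*$ Hurwitz, together with $K_2^*=0$ is a stationary Nash equilibrium with $g=0$. Yet $K_1^*$ is not a multiple of $E_1$. What your argument does establish is this: rows with $g_k\neq 0$ equal $\mathrm{sign}(g_k)(E_i)_k$, and the remaining rows lie in the box $|(K_i^*)_k|\le (E_i)_k$ by admissibility. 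That is membership under the box reading used in Remark~\ref{nonuniqueStat}, so you should adopt that reading explicitly and close the step with admissibility. Your ``main obstacle'' diagnosis is also misplaced. Testing on unit vectors pins a nonzero-gradient row entrywise whatever the support of $E_i$, while no choice of $x$ constrains a zero-gradient row. The two readings agree exactly when each row of $E_i$ has at most one positive entry.
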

\begin{rem} \label{nonuniqueStat}
   The stationary feedback laws $ u_{K_i^*}=-K^*_i x$ can be nonunique. Indeed, for any player $i$ if there exists a row $j$ such that $(r_i+B_i^{\top}p_i)_j =0$, then  all feedback gains satisfying $-(E_i)_j\leq (K_i)_j\leq (E_i)_j$ lead to different sets of  solutions $(p_1, \mydots, p_N)$ to~\eqref{AREs}, which may yield different stabilizing solutions and hence different Nash equilibria. 
\end{rem}
\begin{proof}
We prove the Theorem for the two-player case, the general case follows analogously.

$\Longrightarrow$ Suppose that Assumptions~\ref{ass_pos_dyn},~\ref{ass_nonneg_cost} hold, \scalebox{.94}{$(A, B_1, \mydots, B_N)$} is \scalebox{.94}{$(E_1,\mydots, E_N)$}-stabilizable and \scalebox{.94}{$(\hat p_1, \hat p_2)$} is a stabilizing solution of the algebraic system of equations~\eqref{AREs}. Fix \scalebox{.94}{$u_2^*=-K_2^*x$} with \scalebox{.94}{$K_2^* \in \hspace{0.5mm} \mathrm{diag}(\mathrm{sign}(r_2+B^{\top}_2 \hat p_2))E_2$} and consider the minimization problem of player 1
\begin{align*}
    \scalebox{.94}{$J_1(x_0, u_1, u_{K_2^*})=\int_0^{\infty} \left( s^{\top}_1 x(t)+ r^{\top}_1 u_1(t) \right)dt$}
\end{align*}
subject to  \scalebox{.92}{$\dot x(t)=(A-B_2K_2^*)x(t)+B_1 u_1(t)$}, $x(0)=x_0$. By assumption, the equation  
\begin{align*}
    \scalebox{.94}{$0=s_1^{\top}+ p^{\top}_1A-(r_1^{\top}+ p^{\top}_1B_1)K_1-p^{\top}_1B_2 K^*_2$}
\end{align*}
has a stabilizing
solution $\hat p_1$. Thus, by Lemma~\ref{LR_inf_thm} this optimal control problem admits a solution. The optimal control law is given by $u_{K_1^*}(t)\hspace{0.5mm} \in \hspace{0.5mm} -\mathrm{diag}(\mathrm{sign}(r_1+ B_1^{\top}\hat p_1))E_1x(t)$ and the corresponding minimum cost is \scalebox{.94}{$\hat p_1^{\top}x_0$}. Hence, \scalebox{.94}{$J_1(x_0, u_{K_1^*}, u_{K_2^*})\leq J_1(x_0, u_{K_1}, u_{K_2^*})$ }for all admissible \scalebox{.94}{$K_1$}. An analogous argument applies to player 2. Therefore, \scalebox{.94}{$(u_{K_1^*}, u_{K_2^*})$} is a stationary feedback Nash equilibrium.

$\Longleftarrow$ Suppose that \scalebox{.94}{$(K_1^*, K_2^*)\in \mathcal K_2$} is a feedback Nash equilibrium. By definition, \scalebox{.94}{$J_1(x_0, u_{K_1}^*, u_{K_2}^*)\leq J_1(x_0, u_{K_1}, u_{K_2}^*)$},  \scalebox{.94}{$J_2(x_0, u_{K_1}^*, u_{K_2}^*)\leq J_2(x_0, u_{K_1}^*, u_{K_2})$} for all \scalebox{.94}{$x_0$} and for all admissible state feedback matrices \scalebox{.94}{$K_1$, $K_2$}, such that \scalebox{.94}{$(K_1^*, K_2)\in \mathcal K_2$} and \scalebox{.94}{$(K_1, K_2^*)\in \mathcal K_2$}. By Lemma~\ref{LR_inf_thm} there exist real nonnegative vectors \scalebox{.94}{$\hat p_1, \hat p_2$}, satisfying the equations
\begin{align}\label{aux1}
    \scalebox{.94}{$0=s_1^{\top}+ \hat p_1^{\top}A-(r_1^{\top}+ \hat p_1^{\top}B_1)K_1-
    \hat p_1^{\top}B_2 K^*_2$}\\
    \scalebox{.94}{$0=s_2^{\top}+ \hat p_2^{\top}A-(r_2^{\top}+ \hat p_2^{\top}B_2)K_2-
    \hat p_2^{\top}B_1 K^*_1$}
    \label{aux2}
\end{align}
such that both $A-B_1K_1-B_2K^*_2$ and $A-B_1K^*_1-B_2K_2$ are Hurwitz. Also by Lemma~\ref{LR_inf_thm}, $K_i \in \hspace{0.5mm}\mathrm{diag}(\mathrm{sign}(r_i+B^{\top}_i \hat p_i))E_i$ and $J_i(x_0, u_{K_1^*}, u_{K_2^*})=\hat p_i^{\top}x_0$, $i=1,2$. Substituting $K_2^*$ in~\eqref{aux1} by $\mathrm{diag}(\mathrm{sign}(r_2+B^{\top}_2 \hat p_2))E_2$ and $K_1^*$ in equation~\eqref{aux2} by $\mathrm{diag}(\mathrm{sign}(r_1+B^{\top}_1 \hat p_1))E_1$, shows that $(\hat p_1, \hat p_2)$ satisfies the coupled set of algebraic equations~\eqref{AREs}. Furthermore, replacing $K_1^*$ by $\mathrm{diag}(\mathrm{sign}(r_1+B^{\top}_1 \hat p_1))E_1$ in $A-B_1K^*_1-B_2K_2$ shows that the matrix $A-B_1K_1-B_2K_2$ is Hurwitz, which completes the proof.
\end{proof}
\begin{alg}\label{rem:fp}
    \vspace{-1mm}
    The infinite-horizon equilibrium~\eqref{AREs} is computed via a fixed-point iteration on the sign structure matrices, initialized with \scalebox{.95}{$S_i^{(0)}=I$}. At iteration \scalebox{.95}{$\ell$}, we construct the matrix \scalebox{.95}{$A_{\mathrm{cl}}^{(\ell)}=A-\sum_{j=1}^{N}B_j S_j^{(\ell)} E_j$}, which is Metzler by Ass \ref{ass_nonneg_cost}. Provided it is Hurwitz (verifiable by the linear program in Remark \ref{stblrem}), we solve the linear system \scalebox{.95}{$p_i^{(\ell+1)}=-(A_{\mathrm{cl}}^{(\ell)})^{-\top} \bigl(s_i-E_i^{\top}S_i^{(\ell)}r_i\bigr)$} and update \scalebox{.95}{$S_i^{(\ell+1)}=\mathrm{diag}\!\bigl(\mathrm{sign}(r_i+B_i^{\top} p_i^{(\ell+1)})\bigr)$}, rows of zero input gradient keep their sign as a selection rule (Remark \ref{nonuniqueStat}). Over finitely many sign structures, the iteration either converges to a selected stationary equilibrium or a cycle is detected.
    \vspace{-0.7mm}
\end{alg}
\section{Simulation Example: Large-Scale Global Pollution Game}
To demonstrate the scalability and rich dynamic behavior of the proposed Linear Regulator framework, we simulate a densely interconnected multinational pollution game. This game is inspired by many classical works in the literature where cross-boundary pollution is modeled to capture the effects of neighboring regions' actions on the common resource's pollution levels \cite{van1991differential, jorgensen2001incentive}, and recent works \cite{wei2020differential, de2021equilibrium}. We consider \scalebox{.95}{$N=3$} strategic players (e.g., multinational companies) interacting over a global environment discretized into \scalebox{.95}{$n=30$} zones, with each player managing \scalebox{.95}{$m_i=20$} industrial sectors. This game employs a dense Metzler diffusion matrix \scalebox{.95}{$A$} (\scalebox{.95}{with diagonal entries of $-1.0$} for localized environmental absorption and off-diagonal entries of \scalebox{.95}{$0.03$} capturing physical spillover) and dense input matrices \scalebox{.95}{$B_i \sim \mathcal{U}(0.1, 1.0)$} (reflecting cross-border industrial footprints). A small constraint matrix \scalebox{.95}{$E_i = 0.0004$} is chosen to model regulatory standards applied to all players. The players are assigned heterogeneous cost profiles where state cost vectors \scalebox{.95}{$s_i >0$} represent environmental cost sensitivities uniformly drawn from player specific ranges, while \scalebox{.95}{$r_i <0$} represent marginal economic profits of industrial emissions linearly spaced between player specific bounds. The resulting finite-horizon dynamics exhibit asynchronous switching between maximum emission and abatement in response to the asymmetric global state and competitors' actions. As shown in Figure \ref{fig:controls}, Player 1, representing a high-yield economy (\scalebox{.95}{$s_1 \in [1.5, 2.5] , r_1 \in [-50,-30]$}), prioritizes profit switching to emissions in the horizon of the game. Player 2 models a vulnerable region with severe environmental penalties (\scalebox{.95}{$s_2 \in [2.0, 4.0] , r_2 \in [-40,-20]$}) with moderate economic margins, which forces a delayed switching closer to end of the game. Finally, Player 3 acts as an emerging economy with low environmental sensivity and economic margins (\scalebox{.95}{$s_3 \in [1.0, 2.0] , r_3 \in [-30,-10]$}), resulting in an intermediate switching regime. Consequently, the global pollution state in Figure \ref{fig:states} initially tracks the infinite-horizon steady-state decay, but exhibit a late-stage resurgence as players switch to maximum emission near the terminal horizon. 
\begin{figure}[htbp]
    \centering
    \includegraphics[scale=0.72]{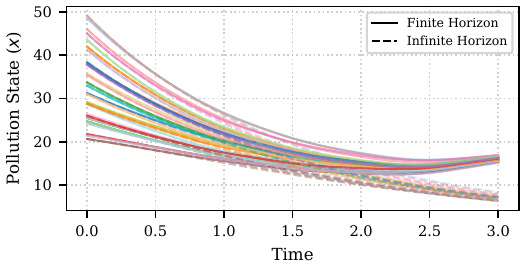}
    \caption{Trajectory of the 30 global pollution states. }
    \label{fig:states}
\end{figure}
\vspace{-15pt}
\begin{figure}[htbp]
    \centering
    \includegraphics[scale=0.72]{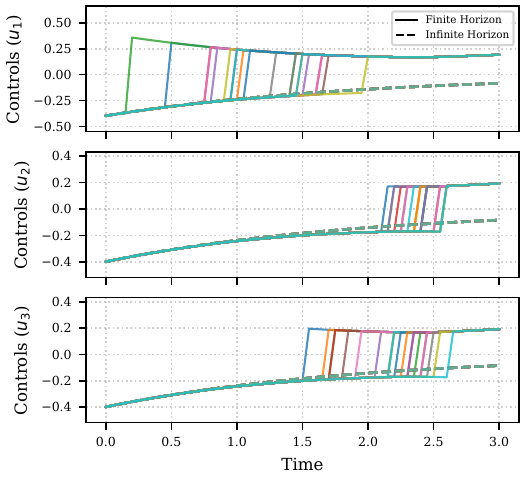}
    \caption{Control inputs across 20 industrial sectors for the three multinational players. Asymmetric rewards and environmental sensitivities trigger staggered, unaligned switching.}
    \label{fig:controls}
\end{figure}
\vspace{-25pt}
\section{Conclusions}
This paper studies a class of continuous-time general-sum non-cooperative games with linear costs, linear dynamics, and elementwise linear input constraints. In the finite-horizon case, under isolated switching regimes, sufficient conditions for feedback Nash equilibria are given by time-varying feedback policies satisfying a system of vector-valued ODEs. In the infinite-horizon case, the equilibria are static feedback stabilizing policies, determined by vector-valued algebraic equations. This work uncovers an interesting dynamic game setting that inherits the scalability potential of the LR framework, while introducing additional and compelling mathematical challenges, such as establishing a priori conditions for the existence, uniqueness, and isolated switching of the coupled ODE solutions, and characterizing the non-isolated switching case, which are natural directions for future research. 
Other directions of future work involve deriving necessary conditions for the existence of feedback Nash equilibria and applying this framework to models of opioid epidemics. 
\vspace{-15pt}
\bibliographystyle{unsrt} 
\bibliography{bibliography}

\end{document}